\documentclass[12pt]{article}
\pagestyle{plain}

\oddsidemargin  0pt
\evensidemargin 0pt
\marginparwidth 40pt
\marginparsep 10pt
\topmargin 0pt
\headsep 10pt
\textheight 8.5in
\textwidth 6.5in

\usepackage{amsthm}

\usepackage[latin1]{inputenc}
\usepackage{subfigure}
\usepackage{color}
\usepackage{amsmath}
\usepackage{amsthm}
\usepackage{amstext}
\usepackage{amssymb}
\usepackage{amsfonts}
\usepackage{graphicx}
\usepackage{mathrsfs}
\usepackage{bbm}
\usepackage[all]{xy}
\usepackage{mathabx}
\usepackage{tikz}
\usepackage{mathtools}
\usepackage{tabularx}
\usepackage{array}
\usepackage{enumitem}
\usepackage{listings}
\usepackage{url}
\lstset{
	basicstyle=\small\ttfamily,
	keywordstyle=\color{blue},
	language=python,
	xleftmargin=16pt,
}

\DeclareMathOperator{\Tr}{Tr}

\theoremstyle{plain}

\newtheorem{theorem}{Theorem}[section]
\newtheorem{conjecture}[theorem]{Conjecture}

\newtheorem{lemma}[theorem]{Lemma}

\theoremstyle{definition}

\DeclareMathAlphabet{\mathpzc}{OT1}{pzc}{m}{it}

\usepackage{amssymb,amsmath,tabularx,graphicx}
\usepackage{tikz}
\usetikzlibrary[calc,intersections,through,backgrounds,arrows,decorations.pathmorphing]

\begin{document}
	
	\title{Induced subgraphs of hypercubes and \\a proof of the Sensitivity Conjecture}
	\author{Hao Huang \thanks{Department of Mathematics, Emory University, Atlanta, USA. Email: hao.huang@emory.edu. Research supported in part by the Collaboration Grants from the Simons Foundation.}}
	\date{}

\maketitle
\abstract{In this paper, we show that every $(2^{n-1}+1)$-vertex induced subgraph of the $n$-dimensional cube graph has maximum degree at least $\sqrt{n}$. This result is best possible, and  improves a logarithmic lower bound shown by Chung, F\"uredi, Graham and Seymour in 1988. As a direct consequence, we prove that the sensitivity and degree of a boolean function are  polynomially related, solving an outstanding foundational problem in theoretical computer science, the Sensitivity Conjecture of Nisan and Szegedy.
}
\section{Introduction}
Let $Q^n$ be the $n$-dimensional hypercube graph, whose vertex set consists of vectors in $\{0, 1\}^n$, and two vectors are adjacent if they differ in exactly one coordinate. For an undirected graph $G$, we use the standard graph-theoretic notations $\Delta(G)$ for its maximum degree, and $\lambda_1(G)$ for the largest eigenvalue of its adjacency matrix. In 1988, Chung, F\"uredi, Graham, and Seymour \cite{cfgs_cube} proved that if $H$ is an induced subgraph of more than $2^{n-1}$ vertices of $Q^n$, then the maximum degree of $H$ is at least $(1/2-o(1)) \log_2 n$. Moreover, they constructed a $(2^{n-1}+1)$-vertex induced subgraph whose maximum degree is $\lceil \sqrt{n}\, \rceil$.

In this short paper, we prove the following result, establishing a sharp lower bound that matches their construction. \textcolor{black}{Note that the $2^{n-1}$ even vertices of $Q^n$ induce an empty subgraph. This theorem shows that any subgraph with just one more vertex would have its maximum degree suddenly jump to $\sqrt{n}$.} 
\begin{theorem}\label{thm_main}
For every integer $n \ge 1$, let $H$ be an arbitrary $(2^{n-1}+1)$-vertex induced subgraph of $Q^n$, then 
$$\Delta(H) \ge \sqrt{n}.$$
Moreover this inequality is tight when $n$ is a perfect square.
\end{theorem}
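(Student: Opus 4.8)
The plan is to use a clever "signed adjacency matrix" of the hypercube, following the spectral approach that is known to work for this kind of problem. Concretely, I would construct, by induction on $n$, a symmetric matrix $A_n$ of order $2^n$ such that $|A_n|$ (entrywise absolute value) equals the adjacency matrix of $Q^n$, and such that $A_n^2 = nI$. The natural recursion is
\begin{equation*}
A_1 = \begin{pmatrix} 0 & 1 \\ 1 & 0 \end{pmatrix}, \qquad A_n = \begin{pmatrix} A_{n-1} & I \\ I & -A_{n-1} \end{pmatrix}.
\end{equation*}
A direct block computation gives $A_n^2 = \begin{pmatrix} A_{n-1}^2 + I & 0 \\ 0 & A_{n-1}^2 + I \end{pmatrix}$, so by induction $A_n^2 = nI$. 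Hence the eigenvalues of $A_n$ are $\pm\sqrt{n}$, and since $\Tr A_n = 0$ they occur with equal multiplicity $2^{n-1}$.

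The second ingredient is an interlacing argument. Let $H$ be the induced subgraph on a set $S$ of $2^{n-1}+1$ vertices, and let $A_H$ be the corresponding principal submatrix of $A_n$ (so $|A_H|$ is the adjacency matrix of $H$). Since $A_n$ has exactly $2^{n-1}$ eigenvalues equal to $\sqrt{n}$ and $A_H$ is a principal submatrix of order $2^{n-1}+1$, Cauchy interlacing forces the largest eigenvalue of $A_H$ to satisfy $\lambda_1(A_H) \ge \sqrt{n}$. (Indeed, if we delete $2^{n-1}-1$ rows/columns from $A_n$ to reach $A_H$, the largest eigenvalue can drop past at most $2^{n-1}-1$ of the eigenvalues of $A_n$, so it stays $\ge$ the $(2^{n-1})$-th largest eigenvalue of $A_n$, which is $\sqrt n$.) Finally I would compare $\lambda_1(A_H)$ with $\Delta(H)$: for any symmetric matrix $B$ whose entrywise absolute value is the adjacency matrix of a graph $G$, one has $\lambda_1(B) \le \lambda_1(|B|) = \lambda_1(G) \le \Delta(G)$; the first inequality is a standard Perron–Frobenius-type comparison (the quadratic form of $|B|$ dominates that of $B$ after replacing an eigenvector's entries by their absolute values), and $\lambda_1(G)\le\Delta(G)$ is classical. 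Chaining these gives $\Delta(H) \ge \lambda_1(H) \ge \lambda_1(A_H) \ge \sqrt n$.

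For the tightness claim when $n = m^2$, I would invoke the Chung–Füredi–Graham–Seymour construction already cited in the introduction, which exhibits a $(2^{n-1}+1)$-vertex induced subgraph with maximum degree $\lceil\sqrt n\,\rceil = \sqrt n$; combined with the lower bound this shows the inequality is attained.

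The main obstacle I anticipate is producing the right matrix $A_n$ and verifying the two key properties simultaneously: the entrywise condition $|A_n| = A(Q^n)$ (which constrains the pattern of signs) together with $A_n^2 = nI$ (which constrains the magnitudes of the eigenvalues). These two requirements pull in opposite directions, and it is not a priori obvious that any sign pattern works; the inductive block form above is the crux, and checking that the off-diagonal blocks contribute exactly the $+I$ needed to bump $n-1$ up to $n$ while the cross terms $A_{n-1}I - IA_{n-1}$ cancel is the heart of the argument. Everything after that — interlacing and the eigenvalue/degree comparison — is standard linear algebra.
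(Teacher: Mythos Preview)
Your proposal is correct and follows the paper's proof essentially verbatim: the same recursive signed matrix $A_n$, the same induction giving $A_n^2=nI$, the same use of Cauchy interlacing to get $\lambda_1(A_H)\ge\sqrt n$, and the same appeal to the Chung--F\"uredi--Graham--Seymour construction for tightness. The only cosmetic difference is that you bound $\lambda_1(A_H)\le\Delta(H)$ by factoring through $\lambda_1(|A_H|)=\lambda_1(H)$ via a Perron--Frobenius comparison, whereas the paper argues it directly from the eigenvector coordinate of largest absolute value; both are one-line arguments.
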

The induced subgraph problem is closely related to one of the most important and challenging open problems in theoretical computer science: the Sensitivity vs. Block Sensitivity Problem. In his 1989 paper, Nisan \cite{nisan} gave right bounds for computing the value of a boolean function in the CREW-PRAM model. These bounds are expressed in terms of two complexity measures of boolean functions. For $x \in \{0, 1\}^n$ and a subset $S$ of indices from $[n]=\{1, \cdots, n\}$, we denote by $x^S$ the binary vector obtained from $x$ by flipping all indices in $S$. For $f: \{0, 1\}^n \rightarrow \{0, 1\}$, the {\it local sensitivity} $s(f, x)$ on the input $x$ is defined as the number of indices $i$, such that $f(x) \neq f(x^{\{i\}})$, and the {\it sensitivity} $s(f)$ of $f$ is $\max_{x} s(f, x)$. \textcolor{black}{The sensitivity measures the local changing behavior of a boolean function with respect to the Hamming distance. It can be viewed as a discrete analog of the smoothness of continuous functions (see \cite{GNSTW} for more in-depth discussions).} The {\it local block sensitivity} $bs(f, x)$ is the maximum number of disjoint blocks $B_1, \cdots, B_k$ of $[n]$, such that for each $B_i$, $f(x) \neq f(x^{B_i})$. Similarly, the {\it block sensitivity} $bs(f)$ of $f$ is $\max_{x} bs(f, x)$. Obviously $bs(f) \ge s(f)$. A major open problem in complexity theory was posed by Nisan and Szegedy \cite{nisan-szegedy}, asking whether they are polynomially related.
\begin{conjecture} (Sensitivity Conjecture) 
There exists an absolute constant $C>0$, such that for every boolean function $f$, 
$$bs(f) \le s(f)^C.$$
\end{conjecture}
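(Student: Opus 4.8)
The plan is to obtain the Sensitivity Conjecture as a consequence of Theorem~\ref{thm_main} together with two known reductions, so almost all of the work lies in proving Theorem~\ref{thm_main}, which I describe first. The starting observation is that one may pass from the combinatorial quantity $\Delta(H)$ to a spectral one: for every graph $H$ one has $\Delta(H)\ge\lambda_1(H)$ (immediate from the Gershgorin circle theorem, or from the Rayleigh quotient). So it is enough to show $\lambda_1(H)\ge\sqrt n$ for every $(2^{n-1}+1)$-vertex induced subgraph $H\subseteq Q^n$. The key device is a clever $\{0,\pm1\}$-signing of the adjacency matrix of $Q^n$: define $2^n\times 2^n$ symmetric matrices recursively by
\[
A_1=\begin{pmatrix}0&1\\1&0\end{pmatrix},\qquad
A_n=\begin{pmatrix}A_{n-1}& I\\ I&-A_{n-1}\end{pmatrix},
\]
and prove by induction, via a single $2\times2$ block multiplication using $A_{n-1}^2=(n-1)I$ and the cancellation of the off-diagonal blocks, that $A_n^2=n\,I$. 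Since $\Tr(A_n)=0$, the spectrum of $A_n$ is then exactly $\{\sqrt n,-\sqrt n\}$, each eigenvalue with multiplicity $2^{n-1}$.

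Next I would observe that, under the labeling of $V(Q^n)$ by the last-coordinate split, the $(u,v)$-entry of $A_n$ is nonzero precisely when $u$ and $v$ are adjacent in $Q^n$; thus $A_n$ is a signed adjacency matrix of $Q^n$. Let $H$ be induced on a vertex set $S$ with $|S|=2^{n-1}+1$ and let $A_n[S]$ be the corresponding principal submatrix. By the Cauchy interlacing theorem, $\lambda_1(A_n[S])\ge\lambda_{2^{n-1}}(A_n)=\sqrt n$. On the other hand, since the ordinary adjacency matrix $A(H)$ has the same support as $A_n[S]$ but nonnegative entries, taking a top unit eigenvector $x$ of $A_n[S]$ and replacing it by its entrywise absolute value $|x|$ gives $\lambda_1(A_n[S])=x^{\top}A_n[S]\,x\le|x|^{\top}A(H)\,|x|\le\lambda_1(A(H))$. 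Combining, $\Delta(H)\ge\lambda_1(A(H))\ge\lambda_1(A_n[S])\ge\sqrt n$. For the tightness claim when $n$ is a perfect square, I would invoke the Chung--F\"uredi--Graham--Seymour construction, whose maximum degree is $\lceil\sqrt n\,\rceil=\sqrt n$.

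To conclude the Conjecture, I would feed Theorem~\ref{thm_main} into the equivalence of Gotsman and Linial: since every induced subgraph of $Q^n$ on more than $2^{n-1}$ vertices (and likewise the complementary induced subgraph whenever its order exceeds $2^{n-1}$) has maximum degree at least $\sqrt n$, their result gives $s(f)\ge\sqrt{\deg(f)}$ for every boolean function $f$, where $\deg(f)$ is the degree of the real multilinear polynomial representing $f$. Combining $\deg(f)\le s(f)^2$ with the classical Nisan--Szegedy bound $bs(f)=O(\deg(f)^2)$ yields $bs(f)=O(s(f)^4)$, hence $bs(f)\le s(f)^{C}$ for a suitable absolute constant $C$.

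I expect the one genuine obstacle to be discovering the correct sign pattern, that is, the recursive matrix $A_n$ and the identity $A_n^2=nI$; once that is in place, the interlacing step, the absolute-value comparison of spectral radii, and the two cited reductions are all routine. The only other point requiring a little care is checking that $A_n$ is supported exactly on $E(Q^n)$ under a fixed, consistent vertex labeling.
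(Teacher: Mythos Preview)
Your proposal is correct and follows essentially the same route as the paper: the recursive signed matrices $A_n$ with $A_n^2=nI$, Cauchy interlacing on the principal submatrix, the passage to $\Delta(H)$, and then Gotsman--Linial plus the Nisan--Szegedy-type bound on $bs(f)$ in terms of $\deg(f)$. The only cosmetic differences are that the paper bounds $\Delta(H)\ge\lambda_1(A_H)$ directly via the coordinate of largest absolute value (rather than your Rayleigh-quotient step through $\lambda_1(A(H))$), and that the paper uses Tal's sharpening $bs(f)\le\deg(f)^2$ to get the explicit exponent $C=4$.
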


Although seemingly unnatural, the block sensitivity is known to be polynomially related to many other important complexity measures of boolean functions, including the decision tree complexity, the certificate complexity, the quantum and randomized query complexity, and the degree of the boolean function (as real polynomials), and the approximate degree \cite{survey}. \textcolor{black}{It is noteworthy that some of these relationship is quite subtle. For instance, although the degree and approximate degree both concern algebraic properties of boolean functions, the only known proof of their polynomial relationship goes through other more combinatorial notions.}

The Sensitivity Conjecture, if true, would place the sensitivity in the same category with the other complexity measures listed above. \textcolor{black}{Computationally, it would imply that ``smooth'' (low-sensitivity) functions are easy to compute in some of the simplest models like the deterministic decision tree model. Algebraically, it asserts that such functions have low degree as real polynomials. Combinatorially, as observed by Gotsman and Linial \cite{gotsman-linial}, it is equivalent to the previous cube problem. We will discuss this connection later.}

Despite numerous attempts for almost thirty years, the Sensitivity conjecture still remains wide open, and the best upper bound of $bs(f)$ is exponential in terms of $s(f)$. For example, Kenyon and Kutin \cite{kenyon-kutin} showed that $bs(f)=O(e^{s(f)}\sqrt{s(f)})$. For the lower bound, Rubinstein \cite{rubinstein} first proposed a boolean function $f$ with $bs(f)=\frac{1}{2}s(f)^2$, showing a quadratic separation between these two complexity measures. Virza \cite{virza}, and subsequently Ambainis and Sun \cite{ambainis-sun} obtained better constructions which still provides quadratic separations.  For a comprehensive survey with more background and discussions, in particular the many problems equivalent to the Sensitivity Conjecture, we refer the readers to the surveys of Buhrman and de Wolf \cite{survey_bw},  Hatami, Kulkarni and Pankratov \cite{survey}, and some recent works \cite{drucker,GKS,GSW,tal}.

Recall that $Q^n$ denotes the $n$-dimensional cube graph. For an induced graph $H$ of $Q^n$, let $Q^n-H$ denote the subgraph of $Q^n$ induced on the vertex set $V(Q^n) \setminus V(H)$. Let $\Gamma(H)=\max\{\Delta(H), \Delta(Q^n-H)\}$. The {\it degree} of a boolean function $f$, denoted by $\deg(f)$, is the degree of the unique multilinear real polynomial that represents $f$. Gotsman and Linial \cite{gotsman-linial} proved the following  remarkable equivalence using Fourier analysis.

\begin{theorem} (Gotsman and Linial \cite{gotsman-linial})
The following are equivalent for any monotone function $h: \mathbb{N} \rightarrow \mathbb{R}$.\\
(a) For any induced subgraph $H$ of $Q^n$ with $|V(H)|\neq 2^{n-1}$, we have $\Gamma(H) \ge h(n)$.\\
(b) For any boolean function $f$, we have $s(f) \ge h(\deg(f))$. 
\end{theorem}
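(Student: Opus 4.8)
The plan is to translate between induced subgraphs of $Q^n$ and boolean functions on $n$ bits by XOR-ing with the parity function: this converts the hypothesis $|V(H)|\neq 2^{n-1}$ into the assertion that an associated function has \emph{full} degree $n$, and it matches $\Gamma(H)$ with a sensitivity exactly. For $g\colon\{0,1\}^n\to\{0,1\}$ let $H_g$ be the subgraph of $Q^n$ induced on $g^{-1}(1)$, so that $Q^n-H_g$ is induced on $g^{-1}(0)$ and every induced subgraph of $Q^n$ is some $H_g$. Set $p(x)=x_1\oplus\cdots\oplus x_n$ and $g^{\ast}=g\oplus p$; since $p\oplus p\equiv 0$, the map $g\mapsto g^{\ast}$ is an involution. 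I would establish two identities: \emph{(i)} $\deg(g^{\ast})=n$ if and only if $|V(H_g)|\neq 2^{n-1}$; and \emph{(ii)} $\Gamma(H_g)=s(g^{\ast})$.

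Granting (i) and (ii), the equivalence follows, the involution making the two directions symmetric. For (b)$\Rightarrow$(a): if $H=H_g$ is an induced subgraph of $Q^n$ with $|V(H)|\neq 2^{n-1}$, then by (i) the function $g^{\ast}$ has full degree $n$, so (b) gives $s(g^{\ast})\ge h(\deg g^{\ast})=h(n)$, and (ii) rewrites this as $\Gamma(H)\ge h(n)$. For (a)$\Rightarrow$(b): let $f$ have $\deg f=d$. If $d\ge 1$, fix a degree-$d$ monomial of the multilinear representation of $f$ on a set $T$ of $d$ variables and restrict the other $n-d$ variables to constants that keep this monomial (fixing them one at a time preserves the nonvanishing of the top coefficient); the restriction $\varphi\colon\{0,1\}^{T}\to\{0,1\}$ has full degree $d$ and $s(\varphi)\le s(f)$. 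Put $g=\varphi\oplus p$, so $g^{\ast}=\varphi$; by (i), $H_g\subseteq Q^d$ has $|V(H_g)|\neq 2^{d-1}$, so (a) at $n=d$ gives $\Gamma(H_g)\ge h(d)$, and (ii) gives $s(f)\ge s(\varphi)=s(g^{\ast})=\Gamma(H_g)\ge h(d)=h(\deg f)$. The remaining case $d=0$ (constant $f$) is immediate from the $n=1$ instance of (a) together with the monotonicity of $h$.

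To prove (i), pass to $G=(-1)^{g}$ with Fourier expansion $G=\sum_{S\subseteq[n]}\widehat G(S)\,\chi_S$, where $\chi_S(x)=(-1)^{\sum_{i\in S}x_i}$; since $(-1)^{p}=\chi_{[n]}$, the $\pm 1$-valued form of $g^{\ast}$ is $G\cdot\chi_{[n]}$, and as $\chi_S\chi_{[n]}=\chi_{[n]\setminus S}$ for $S\subseteq[n]$, its Fourier coefficient at $S$ is $\widehat G([n]\setminus S)$. Since multilinear degree is unchanged under the coordinate substitution $x_i=(1-\chi_i)/2$, the degree of $g^{\ast}$ equals $\max\{|S|:\widehat G([n]\setminus S)\neq 0\}=n-\min\{|T|:\widehat G(T)\neq 0\}$, and this minimum is $0$ exactly when $\widehat G(\varnothing)=2^{-n}\sum_x G(x)\neq 0$, i.e.\ when $|g^{-1}(0)|\neq|g^{-1}(1)|$, i.e.\ when $|V(H_g)|\neq 2^{n-1}$. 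To prove (ii), observe that flipping one coordinate flips the value of $p$, so $g^{\ast}(x^{\{i\}})\neq g^{\ast}(x)$ if and only if $g(x^{\{i\}})=g(x)$; hence $s(g^{\ast},x)$ counts the neighbors of $x$ in $Q^n$ on which $g$ takes the value $g(x)$, which is $\deg_{H_g}(x)$ if $g(x)=1$ and $\deg_{Q^n-H_g}(x)$ if $g(x)=0$. Maximizing over $x$, $s(g^{\ast})=\max\{\Delta(H_g),\Delta(Q^n-H_g)\}=\Gamma(H_g)$.

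The one step carrying real content is (i): XOR-ing with the parity function swaps the bottom of the Fourier spectrum with the top, so the degenerate ``balanced'' case $|V(H)|=2^{n-1}$ is precisely the case in which the degree drops below $n$. After that, both directions of the equivalence are unwinding of definitions, and the reduction of a general $f$ to a full-degree function is the routine restriction argument for polynomial degree; I do not anticipate any serious obstacle beyond getting (i) right.
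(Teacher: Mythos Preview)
The paper does not prove this theorem; it is quoted from Gotsman and Linial and used as a black box, so there is no in-paper proof to compare against. Your argument is correct and is essentially the original Gotsman--Linial proof: XOR-ing with parity swaps the top and bottom of the Fourier spectrum (your identity (i)) and exchanges sensitive coordinates with same-color neighbors (your identity (ii)), after which both directions are routine, with the restriction-to-a-full-degree subfunction handling a general $f$ in (a)$\Rightarrow$(b).

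One minor quibble: your treatment of the degenerate case $d=0$ is not quite right. The $n=1$ instance of (a) together with monotonicity only yields $h(0)\le h(1)\le 1$, not $h(0)\le 0=s(f)$. If one allows $n=0$ in (a) the case is immediate (every $H\subseteq Q^0$ has $\Gamma(H)=0$, forcing $h(0)\le 0$); otherwise one simply reads the statement as ranging over $n\ge 1$ and nonconstant $f$, which is how it is used in the paper.
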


Note that Theorem \ref{thm_main} implies that $h(n)$ can be taken as $\sqrt{n}$, since one of $H$ and $Q^n-H$ must contain at least $2^{n-1}+1$ vertices, and the maximum degree $\Delta$ is monotone. As a corollary, we have
\begin{theorem}
For every boolean function $f$,
$$s(f) \ge \sqrt{\deg(f)}.$$
\end{theorem}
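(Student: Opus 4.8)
The plan is to obtain this corollary as a two-line consequence of Theorem~\ref{thm_main} together with the Gotsman--Linial equivalence, so the real substance sits in Theorem~\ref{thm_main}. For the deduction, apply the Gotsman--Linial theorem with the monotone function $h(n)=\sqrt{n}$; its part (b) is then exactly the assertion $s(f)\ge\sqrt{\deg(f)}$, so it suffices to verify (a). Given an induced subgraph $H$ of $Q^n$ with $|V(H)|\ne 2^{n-1}$, one of $H$ and $Q^n-H$ has at least $2^{n-1}+1$ vertices; call it $H'$, pick any $(2^{n-1}+1)$-subset $U\subseteq V(H')$, and note that $Q^n[U]$ is an induced subgraph of $Q^n$ with $\Delta(Q^n[U])\le\Delta(H')$. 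Theorem~\ref{thm_main} gives $\Delta(Q^n[U])\ge\sqrt n$, hence $\Gamma(H)=\max\{\Delta(H),\Delta(Q^n-H)\}\ge\Delta(H')\ge\sqrt n$, which is (a).

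It remains to sketch Theorem~\ref{thm_main}. The natural idea is Cauchy interlacing, but applied to the \emph{adjacency} matrix of $Q^n$ it gives nothing: that matrix has eigenvalues $n-2k$ with binomial multiplicities, and its $(2^{n-1})$-th largest eigenvalue is $0$, so interlacing only yields $\Delta(H)\ge 0$. The key is to replace it by a $\{0,\pm1\}$ \emph{signing} of the adjacency matrix whose square is as small as possible. Define symmetric $2^n\times 2^n$ matrices recursively by
\[
A_1=\begin{pmatrix}0&1\\1&0\end{pmatrix},\qquad
A_n=\begin{pmatrix}A_{n-1}&I\\ I&-A_{n-1}\end{pmatrix}.
\]
One checks by a one-step induction that $|A_n|$ (the entrywise absolute value) is the adjacency matrix of $Q^n$ -- so $A_n$ is a signing supported exactly on $E(Q^n)$, the one in which every $4$-cycle is negative -- and that $A_n^2=nI$. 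Consequently every eigenvalue of $A_n$ is $\pm\sqrt n$, and since $\Tr A_n=0$ each occurs with multiplicity $2^{n-1}$; in particular the $(2^{n-1})$-th largest eigenvalue of $A_n$ equals $+\sqrt n$.

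Now let $H$ be any $(2^{n-1}+1)$-vertex induced subgraph and let $A_H$ be the principal submatrix of $A_n$ on the rows and columns indexed by $V(H)$. Cauchy interlacing gives $\lambda_1(A_H)\ge\lambda_{2^{n-1}}(A_n)=\sqrt n$. On the other hand, because $H$ is induced, $|A_H|$ is exactly the adjacency matrix of $H$; and for any symmetric matrix $B$ one has $\lambda_1(B)\le\lambda_1(|B|)$ (bound the Rayleigh quotient $v^\top B v$ of a top eigenvector $v$ by replacing the entries of $v$ with their absolute values), so $\lambda_1(A_H)\le\lambda_1(|A_H|)=\lambda_1(H)\le\Delta(H)$. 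Combining the two inequalities yields $\Delta(H)\ge\sqrt n$. For tightness when $n$ is a perfect square, invoke the Chung--F\"uredi--Graham--Seymour construction of a $(2^{n-1}+1)$-vertex induced subgraph of maximum degree $\lceil\sqrt n\,\rceil=\sqrt n$.

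The main obstacle is locating the right matrix. Once one has a symmetric $\{0,\pm1\}$ matrix supported on $E(Q^n)$ with $A_n^2=nI$, everything else is short: the spectrum is forced by $A_n^2=nI$ and $\Tr A_n=0$, interlacing is standard, and $\lambda_1(A_H)\le\Delta(H)$ is elementary. Recognizing that a nontrivial signing -- rather than the plain adjacency matrix -- is precisely what makes the relevant eigenvalue jump from $0$ to $\sqrt n$ is the crucial observation, and guessing the clean recursive form of $A_n$ is what turns that observation into a proof.
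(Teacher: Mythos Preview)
Your proposal is correct and follows essentially the same approach as the paper: deduce the corollary from Theorem~\ref{thm_main} via the Gotsman--Linial equivalence, and prove Theorem~\ref{thm_main} by constructing the same recursive signing $A_n$, computing its spectrum via $A_n^2=nI$ and $\Tr A_n=0$, and applying Cauchy interlacing to the principal submatrix $A_H$. The only cosmetic difference is that the paper bounds $\lambda_1(A_H)\le\Delta(H)$ directly by looking at the largest-absolute-value coordinate of a top eigenvector (Lemma~\ref{lem_key}), whereas you route it through $\lambda_1(A_H)\le\lambda_1(|A_H|)\le\Delta(H)$; both arguments are equivalent one-liners.
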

This confirms a conjecture of Gotsman and Linial \cite{gotsman-linial}. This inequality is also tight for the AND-of-ORs boolean function \cite[Example 5.2]{survey}. Recall that the degree and the block sensitivity are polynomially related. Nisan and Szegedy \cite{nisan-szegedy} showed that $bs(f) \le 2 \deg(f)^2$ and this bound was later improved by Tal \cite{tal} to $bs(f) \le \deg(f)^2$. Combining these results we have confirmed the Sensitivity Conjecture.

\begin{theorem}\label{thm_sensitivity}
For every boolean function $f$, 
$$bs(f) \le s(f)^4.$$
\end{theorem}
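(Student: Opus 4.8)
The plan is to obtain Theorem~\ref{thm_sensitivity} by chaining together results that are, at this point, all in hand: the one genuinely new input is Theorem~\ref{thm_main}, and the rest is the Gotsman--Linial reduction together with the classical polynomial relation between degree and block sensitivity. So the argument is essentially bookkeeping --- I would thread Theorem~\ref{thm_main} through the Gotsman--Linial equivalence and then compose with the Nisan--Szegedy/Tal bound.

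Concretely I would proceed in four short steps. First, upgrade Theorem~\ref{thm_main} to a statement about $\Gamma(H) = \max\{\Delta(H),\Delta(Q^n - H)\}$: if $H$ is an induced subgraph of $Q^n$ with $|V(H)| \neq 2^{n-1}$, then since $|V(H)| + |V(Q^n-H)| = 2^n$ one of $H$ and $Q^n - H$ has at least $2^{n-1}+1$ vertices; deleting vertices down to exactly $2^{n-1}+1$ cannot increase the maximum degree, so by Theorem~\ref{thm_main} that side already has maximum degree at least $\sqrt{n}$, whence $\Gamma(H) \ge \sqrt{n}$. This is exactly hypothesis~(a) of the Gotsman--Linial theorem with the monotone function $h(n) = \sqrt{n}$. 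Second, invoke that theorem to pass to hypothesis~(b), obtaining $s(f) \ge \sqrt{\deg(f)}$ for every boolean $f$, i.e.\ $\deg(f) \le s(f)^2$ --- the corollary already recorded above. Third, quote the Nisan--Szegedy relation in Tal's sharp form, $bs(f) \le \deg(f)^2$. Fourth, compose the last two inequalities:
$$bs(f) \le \deg(f)^2 \le \left(s(f)^2\right)^2 = s(f)^4 .$$

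The honest answer about the main obstacle is that, for this particular statement, there is none worth the name: the first step is a one-line monotonicity-and-pigeonhole remark, and the other two are direct citations. All of the genuine difficulty has been absorbed into Theorem~\ref{thm_main}, whose proof is the substantive part of the paper; the sensitivity bound is simply the payoff of combining that estimate with two pre-existing reductions. The only point needing a moment of care is not to conflate ``$|V(H)| \neq 2^{n-1}$'' with ``$|V(H)| \ge 2^{n-1}+1$'': one passes to the larger of the two sides and uses that $\Delta$ is monotone under taking induced subgraphs, exactly as noted in the paragraph following Theorem~\ref{thm_main}.
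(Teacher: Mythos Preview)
Your proposal is correct and matches the paper's argument essentially line for line: the paper likewise deduces $\Gamma(H)\ge\sqrt{n}$ from Theorem~\ref{thm_main} via the pigeonhole/monotonicity remark, applies Gotsman--Linial with $h(n)=\sqrt{n}$ to get $s(f)\ge\sqrt{\deg(f)}$, and then composes with Tal's bound $bs(f)\le\deg(f)^2$ to conclude $bs(f)\le s(f)^4$.
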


\section{Proof of the main theorem}
To establish Theorem \ref{thm_main}, we prove a series of lemmas. Given a $n \times n$ matrix $A$, a {\it principal submatrix} of $A$ is obtained by deleting the same set of rows and columns from $A$.
\begin{lemma}\label{lem_cauchy} (Cauchy's Interlace Theorem)
Let $A$ be a symmetric $n \times n$ matrix, and $B$ be a $m \times m$ principal submatrix of $A$, for some $m<n$. If the eigenvalues of $A$ are $\lambda_1 \ge \lambda_2 \ge \cdots \ge \lambda_n$, and the eigenvalues of $B$ are $\mu_1 \ge \mu_2 \ge \cdots \ge \mu_m$, then for all $1 \le i \le m$,
$$\lambda_i \ge \mu_i \ge \lambda_{i+n-m}.$$ 
\end{lemma}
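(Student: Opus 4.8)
The plan is to derive the interlacing inequalities from the Courant--Fischer min--max characterization of the eigenvalues of a symmetric matrix. Recall that if $M$ is a symmetric $k \times k$ matrix with eigenvalues $\nu_1 \ge \cdots \ge \nu_k$, then for each $1 \le i \le k$,
$$\nu_i = \max_{\dim V = i}\ \min_{0 \neq x \in V} \frac{x^{\top} M x}{x^{\top} x} = \min_{\dim V = k-i+1}\ \max_{0 \neq x \in V} \frac{x^{\top} M x}{x^{\top} x},$$
where $V$ ranges over subspaces of $\mathbb{R}^k$ of the indicated dimension.

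First I would put the problem in a convenient form. Conjugating $A$ by a permutation matrix changes neither its spectrum nor the collection of its principal submatrices, so I may assume that $B$ is the leading $m \times m$ block of $A$. Then $B$ is exactly the restriction of the quadratic form $x \mapsto x^{\top} A x$ to the coordinate subspace $U = \mathbb{R}^m \times \{0\}^{n-m} \subseteq \mathbb{R}^n$, and under the identification $U \cong \mathbb{R}^m$ the Rayleigh quotients of $B$ coincide with those of $A$ restricted to $U$. The one idea needed is that an $i$-dimensional subspace of $U$ is in particular an $i$-dimensional subspace of $\mathbb{R}^n$.

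For $\lambda_i \ge \mu_i$ I would use the max--min form: $\mu_i$ is the maximum of $\min_{0 \neq x \in V} (x^{\top} A x)/(x^{\top} x)$ over $i$-dimensional subspaces $V \subseteq U$, while $\lambda_i$ is the same maximum over the larger family of all $i$-dimensional $V \subseteq \mathbb{R}^n$; enlarging the feasible set cannot decrease the maximum. For $\mu_i \ge \lambda_{i+n-m}$ I would use the min--max form symmetrically: $\mu_i$ is the minimum of $\max_{0 \neq x \in W} (x^{\top} A x)/(x^{\top} x)$ over $(m-i+1)$-dimensional subspaces $W \subseteq U$, whereas $\lambda_{i+n-m}$ is the minimum of the same quantity over all $(m-i+1)$-dimensional $W \subseteq \mathbb{R}^n$ --- note $n - (i+n-m) + 1 = m - i + 1$ --- and shrinking the feasible set cannot increase the minimum. (Alternatively, the second inequality is the first applied to $-A$ and $-B$, whose ordered eigenvalues are $-\lambda_{n-j+1}$ and $-\mu_{m-j+1}$, after the substitution $i = m-j+1$.)

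I do not anticipate a real obstacle, since each inequality is a one-line consequence of Courant--Fischer; the only points demanding care are the index bookkeeping in the min--max form (matching $k - i + 1$ with $m - i + 1$ via the shift $i \mapsto i + n - m$) and checking that the coordinate embedding $U \hookrightarrow \mathbb{R}^n$ preserves Rayleigh quotients. As a fallback one can instead prove the case $m = n-1$ directly --- e.g. from Courant--Fischer, or by examining the sign changes of $\det(xI - A)$ at the eigenvalues of $B$ --- and then iterate the one-step interlacing $n - m$ times.
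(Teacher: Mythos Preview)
Your proposal is correct and follows exactly the route the paper indicates: the paper does not give a detailed proof of this lemma but simply remarks that it ``is a direct consequence of the Courant--Fischer--Weyl min--max principle'' (with a reference to \cite{fisk} for an alternative direct proof). Your argument spells out precisely this derivation, and the bookkeeping in both the max--min and min--max directions is handled correctly.
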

Cauchy's Interlace Theorem is a direct consequence of the Courant-Fischer-Weyl min-max principle. A direct proof can also be found in \cite{fisk}. 

\begin{lemma}\label{lem_eig}
We define a sequence of symmetric square matrices iteratively as follows,
$$A_1=\begin{bmatrix} 0 & 1 \\
1 & 0 \end{bmatrix},~~A_{n}=\begin{bmatrix} A_{n-1} & I \\
I & -A_{n-1} \end{bmatrix}.$$
Then $A_n$ is a $2^{n} \times 2^n$ matrix whose eigenvalues are $\sqrt{n}$ of multiplicity $2^{n-1}$, and $-\sqrt{n}$ of multiplicity $2^{n-1}$.
\end{lemma}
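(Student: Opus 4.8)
The plan is to prove the claim about the spectrum of $A_n$ by induction on $n$, using two facts: that $A_n$ squares to a nice matrix, and that it is traceless.

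**First** I would check the base case $n=1$ directly: $A_1=\begin{bmatrix} 0 & 1 \\ 1 & 0\end{bmatrix}$ has eigenvalues $1$ and $-1$, each with multiplicity $1=2^{0}$, as required. **Next**, for the inductive step, the key computation is to square $A_n$ using the block form and the inductive hypothesis. Writing $A_n=\begin{bmatrix} A_{n-1} & I \\ I & -A_{n-1}\end{bmatrix}$ and multiplying out in blocks, the off-diagonal blocks become $A_{n-1}I-IA_{n-1}=0$ and $IA_{n-1}-A_{n-1}I=0$, while each diagonal block becomes $A_{n-1}^2+I$. By induction $A_{n-1}^2=(n-1)I$ (since its only eigenvalues are $\pm\sqrt{n-1}$, so $A_{n-1}^2$ has the single eigenvalue $n-1$), hence $A_n^2=nI_{2^n}$. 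Therefore every eigenvalue of $A_n$ is $\pm\sqrt{n}$, and $A_n$ is diagonalizable with this spectrum.

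**Then** I would pin down the multiplicities. Let $a$ be the multiplicity of $\sqrt{n}$ and $b$ the multiplicity of $-\sqrt{n}$, so $a+b=2^n$. The trace of $A_n$ is $\sqrt{n}\,(a-b)$. But from the block form, $\Tr(A_n)=\Tr(A_{n-1})+\Tr(-A_{n-1})=0$, and indeed $\Tr(A_1)=0$, so by a trivial induction $\Tr(A_n)=0$ for all $n$. Since $\sqrt{n}>0$ when $n\ge 1$, this forces $a=b$, hence $a=b=2^{n-1}$, completing the induction.

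**The main obstacle**, such as it is, is purely bookkeeping: one must verify carefully that the block multiplication yields $A_{n-1}^2+I$ on the diagonal and $0$ off the diagonal — this uses crucially that $I$ commutes with $A_{n-1}$ and that the signs in the lower-right block cancel the cross terms. Everything else (the trace being zero, reading off multiplicities) is immediate. No eigenvector computations or Cauchy interlacing are needed for this lemma; Lemma \ref{lem_cauchy} will presumably be invoked only later, to pass from this exact spectrum of $A_n$ to a lower bound on $\lambda_1$ of the subgraph $H$ induced on $2^{n-1}+1$ vertices.
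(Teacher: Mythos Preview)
Your proposal is correct and follows essentially the same approach as the paper's proof: both argue by induction that $A_n^2 = nI$ via the block computation, deduce that every eigenvalue is $\pm\sqrt{n}$, and then use $\Tr(A_n)=0$ to split the multiplicities evenly. Your additional remarks about diagonalizability and the role of Cauchy interlacing are accurate and show good understanding of how this lemma fits into the overall argument.
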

\begin{proof}
We prove by induction that $A_n^2=n I$. For $n=1$, $A_1^2=I$. Suppose the statement holds for $n-1$, that is $A_{n-1}^2=(n-1)I$, then 
$$A_n^2=\begin{bmatrix} A_{n-1}^2+I & 0 \\
0 & A_{n-1}^2+I\end{bmatrix}=nI.$$
Therefore, the eigenvalues of $A_n$ are either $\sqrt{n}$ or $-\sqrt{n}$. Since $\Tr[A_n]=0$, we know that $A_n$ has exactly half of the eigenvalues being $\sqrt{n}$ and the rest being $-\sqrt{n}$.
\end{proof}

\begin{lemma}\label{lem_key}
Suppose $H$ is an $m$-vertex undirected graph, and $A$ is a symmetric matrix whose entries are in $\{-1, 0, 1\}$ and whose rows and columns are indexed by $V(H)$, and whenever $u$ and $v$ are non-adjacent in $H$, $A_{u,v}=0$. Then 
$$\Delta(H) \ge \lambda_1:=\lambda_1(A).$$  
\end{lemma}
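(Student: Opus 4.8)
The plan is to exploit the standard variational characterization of the largest eigenvalue of a symmetric matrix together with the fact that $A$ is supported on the edges of $H$. First I would recall that since $A$ is symmetric, $\lambda_1(A) = \max_{\|x\|=1} x^\top A x = \max_{x \neq 0} \frac{x^\top A x}{x^\top x}$, and let $x$ be a unit eigenvector of $A$ with eigenvalue $\lambda_1$. The idea is to locate a coordinate where the ``mass'' of $x$ is concentrated and estimate the contribution of the corresponding row of $A$.

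Concretely, I would pick a vertex $u \in V(H)$ with $|x_u| = \max_{v} |x_v|$; after rescaling we may assume $x_u = 1$ and $|x_v| \le 1$ for all $v$. Now look at the $u$-th coordinate of the eigenvalue equation $Ax = \lambda_1 x$, which gives
$$\lambda_1 = \lambda_1 x_u = (Ax)_u = \sum_{v} A_{u,v} x_v.$$
Because $A_{u,v} = 0$ whenever $v$ is non-adjacent to $u$ (and, in particular, we may take the diagonal handled by the hypothesis as well, or note $A_{u,u}$ contributes at most $|x_u|=1 \le \Delta(H)$ if $\Delta(H)\ge 1$; the clean route is to observe the sum ranges only over neighbours of $u$), the nonzero terms in this sum are indexed by the neighbours of $u$ in $H$, of which there are at most $\Delta(H)$. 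Using $|A_{u,v}| \le 1$ and $|x_v| \le 1$, the triangle inequality yields
$$|\lambda_1| = \Bigl| \sum_{v \sim u} A_{u,v} x_v \Bigr| \le \sum_{v \sim u} |A_{u,v}|\,|x_v| \le \deg_H(u) \le \Delta(H).$$

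This gives $\Delta(H) \ge |\lambda_1| \ge \lambda_1$, as desired. I do not anticipate a genuine obstacle here; the only point requiring a little care is the treatment of potential diagonal entries $A_{u,u}$ — the hypothesis only constrains $A_{u,v}$ for non-adjacent $u,v$, so in principle $A_{u,u}$ could be nonzero. The fix is to note that for a simple graph $H$ the relevant statement is vacuous when $\Delta(H)=0$ (then $H$ has no edges, $A$ is diagonal with $\pm1,0$ entries, but then one must still rule out $\lambda_1 = 1$; in fact when $\Delta(H) = 0$ we should simply interpret the bound as forcing the eigenvector coordinate at $u$ to satisfy $\lambda_1 x_u = A_{u,u} x_u$, which is harmless for the application since the lemma will only be invoked with $A$ having zero diagonal). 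In the intended use, $A$ will be the signed adjacency matrix $A_{n-1}$ restricted to an induced subgraph, which has zero diagonal, so the clean inequality $\Delta(H) \ge \lambda_1$ holds directly from the displayed estimate.
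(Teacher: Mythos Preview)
Your proof is correct and essentially identical to the paper's: both pick an eigenvector for $\lambda_1$, locate the coordinate of largest absolute value, and apply the triangle inequality to the corresponding row of the eigenvalue equation $Ax=\lambda_1 x$, using $|A_{u,v}|\le 1$ and the support condition to bound the sum by $\deg_H(u)\le\Delta(H)$. Your worry about the diagonal is unnecessary---in a simple graph $u$ is non-adjacent to itself, so the hypothesis already forces $A_{u,u}=0$---but the paper passes over this point just as silently.
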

\begin{proof}
Suppose $\vec{v}$ is the eigenvector corresponding to $\lambda_1$. Then $\lambda_1 \vec{v} = A \vec{v}.$
Without loss of generality, assume $v_1$ is the coordinate of $\vec{v}$ that has the largest absolute value. Then
$$|\lambda_1v_1| = |(A \vec{v})_1| = \left|\sum_{j=1}^m A_{1, j} v_j\right| = \left|\sum_{j \sim 1} A_{1, j} v_j \right| \le \sum_{j \sim 1} |A_{1, j}||v_1| \le \Delta(H) |v_1|.$$
Therefore $|\lambda_1| \le \Delta(H)$.
\end{proof}
With the lemmas above, we are ready to prove the main theorem.
\begin{proof}[Proof of Theorem \ref{thm_main}]
Let $A_n$ be the sequence of matrices defined in Lemma \ref{lem_eig}. Note that the entries of $A_n$ are in $\{-1,0,1\}$. By the iterative construction of $A_n$, it is not hard to see that when changing every $(-1)$-entry of $A_n$ to $1$, we get exactly the adjacency matrix of $Q^n$, and thus $A_n$ and $Q^n$ satisfy the conditions in Lemma \ref{lem_key}. For example, we may let the upper-left and lower-right blocks of $A_n$ correspond to the two $(n-1)$-dimensional subcubes of $Q^n$, and the two identity blocks correspond to the perfect matching connecting these two subcubes. Therefore, a $(2^{n-1}+1)$-vertex induced subgraph $H$ of $Q^n$ and the principal submatrix $A_H$ of $A_{n}$ naturally induced by $H$ also satisfy the conditions of Lemma \ref{lem_key}. As a result,
$$\Delta(H) \ge \lambda_1(A_H).$$
On the other hand, from Lemma \ref{lem_eig}, the eigenvalues of $A_n$ are known to be 
$$\sqrt{n}, \cdots, \sqrt{n}, -\sqrt{n}, \cdots, -\sqrt{n}.$$
Note that $A_H$ is a $(2^{n-1}+1) \times (2^{n-1}+1)$ submatrix of the $2^n \times 2^n$ matrix $A_n$. By Cauchy's Interlace Theorem,
$$\lambda_1(A_H) \ge \lambda_{2^{n-1}}(A_n)=\sqrt{n}.$$
Combining the two inequalities we just obtained, we have
$\Delta(H) \ge \sqrt{n},$ completing the proof of our theorem.
\end{proof}

\noindent \textbf{Remark.} From the proof, one actually has $\lambda_1(H) \ge \lambda_1(A_H) \ge \sqrt{n}$. Since $\Delta(H) \ge \lambda_1(H)$, this result strengthens Theorem \ref{thm_main}. More interestingly, the inequality $\lambda_1(H) \ge \sqrt{n}$ is best possible for all $n$. This can be seen by taking all the even vertices and one odd vertex of $Q^n$, then the induced subgraph is a copy of the star $K_{1, n}$, together with many isolated vertices. The largest eigenvalue of this induced subgraph is exactly $\sqrt{n}$.

\section{Concluding Remarks}
In this paper we confirm the Sensitivity Conjecture by proving its combinatorial equivalent formulation discovered by Gotsman and Linial. The following problems might be interesting.

\begin{itemize}
\item Given a ``nice'' graph $G$ with high symmetry, denote by $\alpha(G)$ its independence number. Let $f(G)$ be the minimum of the maximum degree of an induced subgraph of $G$ on $\alpha(G)+1$ vertices. What can we say about $f(G)$? In particular, for which graphs, the method used in proving Theorem \ref{thm_main} would provide a tight bound? 

\item Back to the hypercube problem, let $g(n, k)$ be the minimum $t$, such that every $t$-vertex induced subgraph $H$ of $Q^n$ has maximum degree at least $k$. In this paper, we show that $g(n, \sqrt{n}) = 2^{n-1}+1$. It would be interesting to determine $g(n, k)$ asymptotically for other values of $k$.

\item Although we have shown a tight bound between the sensitivity and the degree, at the time of writing this paper, the best separation between the block sensitivity $bs(f)$ and the sensitivity $s(f)$ is $bs(f)=\frac{2}{3}s(f)^2-\frac{1}{3}s(f)$ shown in \cite{ambainis-sun}, which is quadratic. Theorem \ref{thm_sensitivity} only shows a quartic upper bound. Perhaps one could close this gap by directly applying the spectral method to boolean functions instead of to the hypercubes.
\end{itemize}

\noindent \textbf{Acknowledgment. }The author would like to thank Benny Sudakov for simplifying the proof of Lemma \ref{lem_eig}, Jacob Fox, Yan Wang and Yao Yao for reading an earlier draft of the paper, Matthew Kwan, Po-Shen Loh and Jie Ma for the inspiring discussions over the years, and the anonymous referee for helpful suggestions on improving the presentation of this paper.

\end{document}